\documentclass[a4paper,12pt]{amsart}
\usepackage{latexsym}
\usepackage{a4wide}
\usepackage{amscd}
\usepackage{graphics}
\usepackage{graphicx}
\usepackage[english]{varioref}
\usepackage{amsmath}
\usepackage{amssymb}
\usepackage{mathrsfs}
\usepackage{amsthm}
\usepackage{amssymb,tikz}
\usepackage{verbatim}
\usepackage{stmaryrd}
\usepackage[xindy]{glossaries}

\usepackage{tikz}
\tikzstyle{mybox} = [draw=black, very thick, rectangle, rounded corners, inner ysep=5pt, inner xsep=5pt]

\usepackage{bbm}
\usepackage{soul}
\usepackage{soul,color}
\usepackage{accents}
\usepackage{enumerate}
\usepackage[utf8x]{inputenc}
\usepackage[T1]{fontenc}
\usepackage[english]{babel}
\usepackage{amsfonts, amscd, amsmath, amssymb, amsthm, mathrsfs, mathtools}
\usepackage{braket}
\input xy
\xyoption{all}
\usepackage[utf8x]{inputenc}
\usepackage[T1]{fontenc}
\usepackage[english]{babel}
\usepackage{amsfonts, amscd, amsmath, amssymb, amsthm, mathrsfs, mathtools}
\usepackage{graphicx}
\usepackage{subfig}
\usepackage{braket}
\usepackage[left=2.5cm,right=2.5cm,top=2.5cm,bottom=2.5cm]{geometry}
\usepackage{fancyhdr}
\theoremstyle{definition}

\newtheorem{remark}{Remark}[section]

\newtheorem{esempio}{Example}[section]

\newtheorem{definizione}{Definition}[section]

\theoremstyle{plain}
\newtheorem{teorema}{Theorem}[section]
\newtheorem{proposizione}{Proposition}[section]
\newtheorem{lemma}{Lemma}[section]

\newcommand{\numberset}{\mathbb}

\newcommand{\R}{\numberset{R}}
\newcommand{\N}{\numberset{N}}
\newcommand{\Z}{\numberset{Z}}

\usepackage{bookmark,hyperref}

\DeclarePairedDelimiter{\abs}{\lvert}{\rvert}
\DeclarePairedDelimiter{\norma}{\lVert}{\rVert}

\makeatletter
\let\oldabs\abs
\def\abs{\@ifstar{\oldabs}{\oldabs*}}
\let\oldnorma\norma
\def\norma{\@ifstar{\oldnorma}{\oldnorma*}}


\title{A Conley-type Lyapunov function for the strong chain recurrent set}

\author[Bernardi]{Olga Bernardi$^1$}
\address{$^2$Dipartimento di Matematica ``Tullio Levi-Civita'', Università di Padova, Via Trieste 63, 35121 Padova, Italy}
\email{obern@math.unipd.it}

\author[Florio]{Anna Florio$^2$}
\address{$^2$Sorbonne Université, Université de Paris, CNRS, Institut de 
	Mathématiques de Jussieu-Paris Rive Gauche, F-75005 Paris, France}
\email{anna.florio@imj-prg.fr}

\author [Wiseman]{Jim Wiseman$^3$}
\address{$^3$Department of Mathematics, Agnes Scott College, Decatur, Georgia, USA}
\email{jwiseman@agnesscott.edu}

\date{\today}

\begin{document}

\selectlanguage{english}
\maketitle
\begin{abstract}
Let $\phi:X\times\R \rightarrow X$ be a continuous flow on a compact metric space $(X,d)$. In this article we constructively prove the existence of a continuous Lyapunov function for $\phi$ which is strictly decreasing outside $\mathcal{SCR}_d(\phi)$. Such a result generalizes Conley's Fundamental Theorem of Dynamical Systems for the strong chain recurrent set. 

\end{abstract}

\section{Introduction}
\noindent This article continues a project of the authors, started in \cite{BF1} and proceed in \cite{BF2} and \cite{BFW}, concerning the study of the links between recurrent sets and Lyapunov functions. \\
~\newline
Let $\phi$ be a continuous flow  on a compact metric space $(X,d)$. The aim of the present paper is to give a constructive proof of the existence of a continuous Lyapunov function for $\phi$ which is strictly decreasing outside the strong chain recurrent set $\mathcal{SCR}_d(\phi)$. \\
Such a result generalizes Conley's Fundamental Theorem of Dynamical Systems --see the seminal book \cite{conBOOK}[Section 6.4, Page 39]-- since we look at  $\mathcal{SCR}_d(\phi)$, instead of the chain recurrent set $\mathcal{CR}(\phi)$. \\
~\newline
For dynamics given by the iteration of a homeomorphism, the problem has already been  solved by Fathi and Pageault in \cite{FP1}[Theorem 2.6], by using Fathi's formalism in weak KAM theory. In particular, they proved the following result. \\
~\newline
\textit{THEOREM. Let $f:X\rightarrow X$ be a homeomorphism on a compact metric space $(X,d)$. Then there exists a Lipschitz continuous Lyapunov function for $f$ which is strictly decreasing outside $\mathcal{SCR}_d(f)$.} \\
~\newline
Yokoi independently proved the existence of a continuous Lyapunov function (a priori non-Lipschitz) for a homeomorphism on a compact metric space, that strictly decreases outside the $\mathcal{SCR}_d(f)$ (see \cite{Y}[Theorem~5.2]).
~\newline
Combining the variational approach established by Fathi and Pageault in \cite{FP1} and some of Conley's techniques presented in \cite{conBOOK}, the authors Bernardi and Florio have attacked the same problem in the framework of continuous flows and --see \cite{BF2}[Theorem 4.1]-- they proved the next result. \\
~\newline
\textit{THEOREM. Let $\phi:X\times\R\rightarrow X$ be a continuous flow on a compact metric space $(X,d)$, uniformly Lipschitz continuous on the compact subsets of $[0,+\infty)$. Then there exists a Lipschitz continuous Lyapunov function for $\phi$ which is strictly decreasing outside $\mathcal{SCR}_d(\phi)$.} \\
~\newline
The proof of the above result is constructive. Nevertheless, the authors did not manage to generalize the result for only continuous flows, getting rid of the further hypothesis about uniformly Lipschitz regularity of the flow with respect to time. This is due to the fact that, in building the Lyapunov function for the flow, some ``regularizing'' process of an initial function (coming from the variational approach) is needed. In particular, some Lipschitz-like control over time is required. Even if any flow of a Lipschitz continuous vector field satisfies the regularity hypothesis of the above theorem, there are examples of (dynamically interesting) flows that fail to fulfill such a hypothesis, see e.g. Example \ref{es Martin}.\\
~\newline
In this article --for a continuous flow-- we prove the existence of a Lyapunov function, which is strictly decreasing outside the strong chain recurrent set, in the most general framework. In order to obtain it, we exploit the Conley-type decomposition of the strong chain recurrent set, previously shown in \cite{BF1}[Theorem 2]. The constructive method to prove the existence of the required Lyapunov function is then inspired by Conley's original ideas presented in the proof of his celebrated theorem in the chain recurrent case. \\	
~\newline
Thus, our main result reads as follow.
\begin{teorema}\label{main thm}
If $\phi:X\times\R \rightarrow X$ is a continuous flow on a compact metric space $(X,d)$, then there exists a continuous Lyapunov function for $\phi$ which is strictly decreasing outside $\mathcal{SCR}_d(\phi)$.
\end{teorema}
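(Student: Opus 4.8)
The plan is to imitate Conley's proof of his Fundamental Theorem, replacing the topological attractor--repeller pairs by the metric ones furnished by the decomposition of \cite{BF1}[Theorem~2]. That result provides a countable family of ``elementary'' pieces $\{(A_n,A_n^{\ast})\}_{n\in\N}$ --- attractor--repeller pairs adapted to the metric $d$ --- whose trapped unions recover the strong chain recurrent set,
\[\mathcal{SCR}_d(\phi)=\bigcap_{n\in\N}\big(A_n\cup A_n^{\ast}\big).\]
The countability of the family is exactly what will allow the final recombination by a convergent series; the whole difficulty is thereby transferred to the individual pieces.

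For each $n$ I would construct a continuous function $g_n\colon X\to[0,1]$ that is non-increasing along the orbits of $\phi$ and strictly decreasing along every orbit meeting $X\mysetminus(A_n\cup A_n^{\ast})$. The monotonicity is most transparently produced by averaging a non-negative weight along forward time: for a continuous $\beta_n\ge 0$ one sets $\ell_n(x)=\int_0^{\infty}\beta_n(\phi(x,s))\,ds$, and the change of variable $u=t+s$ gives the shift identity
\[\ell_n(\phi(x,t))=\ell_n(x)-\int_0^{t}\beta_n(\phi(x,s))\,ds,\]
so that $\ell_n$ is non-increasing along orbits and strictly decreasing wherever $\beta_n>0$ along the orbit. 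Following Conley, to obtain strict decrease on all of $X\mysetminus(A_n\cup A_n^{\ast})$ one lets $\beta_n$ be positive on a suitable fundamental region and nests such weights across a decreasing sequence of $d$-trapping neighbourhoods of $A_n$ (and symmetrically for $A_n^{\ast}$), finally normalising to take values in $[0,1]$.

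I would then define the single function $g=\sum_{n\in\N}2^{-n}g_n$. As $0\le g_n\le 1$, the series converges uniformly, so $g$ is continuous; as a sum of functions non-increasing along orbits, $g$ is itself non-increasing along orbits, i.e. a Lyapunov function for $\phi$. Finally, if $x\notin\mathcal{SCR}_d(\phi)$, the decomposition yields an index $n$ with $x\notin A_n\cup A_n^{\ast}$; then $g_n$ strictly decreases along the orbit of $x$ while every other term is non-increasing, whence $g$ strictly decreases there. This is precisely the assertion of the theorem.

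The hard part is the per-piece construction for an \emph{arbitrary} continuous flow. This is where \cite{BF2} had to assume uniform Lipschitz continuity in time: producing a single Lyapunov function directly from the strong-chain variational potential required a regularisation step that relied on such control. The role of the decomposition is to reduce matters to pieces simple enough that the construction above goes through with no regularisation --- at the price that the output is merely continuous rather than Lipschitz. Concretely, the delicate points are the finiteness and, above all, the continuity of the averaged functions $\ell_n$: along orbits converging to $A_n$ the weight must stay positive (to keep strict monotonicity) yet remain integrable (to keep $\ell_n$ finite and continuous), and near the repeller $A_n^{\ast}$ forward orbits linger, so the convergence must be controlled uniformly. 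Both are handled using compactness of $X$ together with the $d$-trapping structure of the pieces supplied by \cite{BF1}, and it is here that one must verify that no regularity of $\phi$ beyond continuity is needed.
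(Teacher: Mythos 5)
Your high-level architecture (countable family of pairs, one Lyapunov function per pair, weighted uniformly convergent sum) does match the paper's, but there are two genuine gaps, and each is where the paper's actual work lies. First, the countability you start from is not in \cite{BF1}: Theorem \ref{teoremann} expresses $\mathcal{SCR}_d(\phi)$ as the intersection of $B\cup B^{\bullet}$ over \emph{all} strongly stable sets $B$, an in general uncountable family, and these pairs are \emph{not} attractor--repeller pairs. Here $B^{\bullet}=\{x:\omega(x)\cap B=\emptyset\}$ need not be closed, $B\cup B^{\bullet}$ need not be closed, and there can be uncountably many distinct sets $B\cup B^{\bullet}$ (Example \ref{es union open}). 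Extracting a countable subfamily with the same intersection is therefore a theorem in its own right (Theorem \ref{Teo Jim}): the Lindel\"of-type argument (Lemma \ref{primo}) applies only to the open complements of the \emph{closures} $\text{cl}(B\cup B^{\bullet})$, and one must separately prove that passing to closures loses nothing (Lemma \ref{secondo}), which requires, for each $x\notin\mathcal{SCR}_d(\phi)$, manufacturing a strongly stable set $\tilde{B}=\omega(\bar{\Omega}(\tilde{C},\varepsilon,T))$ with $x\notin \text{cl}(\tilde{B}\cup\tilde{B}^{\bullet})$ via the strong-chain machinery of \cite{BF1}. Your proposal silently assumes all of this.

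Second, the per-pair construction you sketch would fail as stated, and the step you defer to ``compactness plus the trapping structure'' is precisely the crux. A weight integral $\ell_n(x)=\int_0^{\infty}\beta_n(\phi_s(x))\,ds$ with $\beta_n$ supported on a fundamental region is generically discontinuous at the ``repeller'' side: on an invariant set whose orbits never meet $\text{supp}(\beta_n)$ the integral is $0$, while orbits starting arbitrarily close to it eventually transit the region and pick up a contribution bounded below, so $\ell_n$ jumps. Worse, in this setting there is no repeller at all to trap symmetrically: $B^{\bullet}$ is merely invariant and possibly non-closed, admits no backward trapping neighborhoods, and a Lyapunov function for the pair cannot even be constant on $B^{\bullet}$ --- Example \ref{funzione 01} shows it must assume every value in $(0,1)$ there. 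The paper's construction (Theorem \ref{OlgaAnna}) avoids infinite-horizon weight integrals entirely: it builds $l$ from the nested closed neighborhoods $U_{\eta}$ of Definition \ref{s s set}, takes the monotone envelope $k(x)=\sup_{t\geq 0}l(\phi_t(x))$, and only then averages against the integrable kernel $e^{-s}$, so finiteness is automatic; the entire technical content is the continuity of $k$, proved using all three defining properties of strongly stable sets --- the metric nesting $(i)$, $B=\bigcap_{\eta}\omega(U_{\eta})$ $(ii)$, and the uniform eventual-invariance times $T(\eta)$ in $(iii)$. That argument, not routine compactness, is what makes the construction work for a merely continuous flow; by deferring it you have relocated, rather than closed, the gap that forced the Lipschitz hypothesis in \cite{BF2}.
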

~\newline
\textbf{Acknowledgements.} Olga Bernardi thanks prof. Alberto Abbondandolo who introduced her into Conley's theory of Lyapunov functions and posed her the problem solved in this article. O. Bernardi and J. Wiseman have been supported by the PRIN project 2017S35EHN 003 2019-2021 ``Regular and stochastic behaviour in dynamical systems''.

\section{Preliminaries}
\noindent Let $\phi:X\times\R\rightarrow X$, $(x,t) \mapsto \phi_t(x)$ be a continuous flow on a compact metric space $(X,d)$. In this section we recall the notions of Lyapunov function, strong chain recurrent point, stable set and strongly stable set. 
\smallskip
\begin{definizione}
A continuous function $h:X\rightarrow \R$ is a Lyapunov function for $\phi$ if $h(\phi_t(x)) \leq h(x)$ for every $t\geq 0$ and $x\in X$. 
\end{definizione}
\begin{definizione} Given $x,y\in X$, $\varepsilon>0$ and $T>0$, a strong $(\varepsilon,T)$-chain from $x$ to $y$ is a finite sequence $(x_i,t_i)_{i=1,\dots,n}\subset X\times\R$ such that $t_i\geq T$ for all $i$, $x_1=x$ and, setting $x_{n+1}=y$, we have
$$
\sum_{i=1}^nd(\phi_{t_i}(x_i),x_{i+1})<\varepsilon.
$$
A point $x\in X$ is said to be strong chain recurrent if for any $\varepsilon>0$ and $T>0$ there exists a strong $(\varepsilon,T)$-chain from $x$ to $x$. \\
\noindent The set of strong chain recurrent points is denoted by $\mathcal{SCR}_d(\phi)$.
\end{definizione}
\begin{definizione}
\indent A closed set $B \subset X$ is stable if it has a neighborhood base of forward invariant sets.
\end{definizione}
\noindent We refer to \cite{A}[Page 1732] and \cite{AHK}[Paragraph 1.1]. If $B$ is a stable set, then for every $x \in X$ either $\omega(x) \cap B = \emptyset$ or $\omega(x) \subseteq B$, see \cite{BF1}[Lemma 4.1]. Moreover, the complementary of a stable set $B$ is defined as 
$$B^{\bullet} := \{x \in X: \ \omega(x) \cap B = \emptyset\}.$$ 
The set $B^{\bullet} \subset X$ is invariant and disjoint from $B$ but it is not necessarily closed even if $B$ is closed (see also Paragraph 1.5 in \cite{conARTICOLO}).
\begin{definizione}\label{s s set}
A closed set $B\subset X$ is strongly stable if there exist a family $(U_{\eta})_{\eta\in(0,1)}$ of closed nested neighborhoods of $B$ and a function
	$$
	(0,1)\ni\eta\mapsto T(\eta)\in(0,+\infty)
	$$
	bounded on compact subsets of $(0,1)$, such that:
	\begin{itemize}
		\item[$(i)$] for any $0<\eta<\lambda<1$, $\{x\in X :\ d(x,U_{\eta})<\lambda-\eta\}\subseteq U_{\lambda}$; \vspace{5pt}
		\item[$(ii)$] $B=\bigcap_{\eta\in(0,1)}\omega(U_{\eta})$; \vspace{5pt}
		\item[$(iii)$] for any $0<\eta<1$, $\text{cl}\{ \phi_{[T(\eta),+\infty)}(U_{\eta}) \}\subseteq U_{\eta}$.
	\end{itemize}
\end{definizione}
\noindent We refer to \cite{BF1}[Definition 4.2]. Every strongly stable set $B$ is closed, forward invariant and stable, see \cite{BF1}[Remark 4.1].  \\
\noindent In \cite{BF1}[Theorem 4.2], the subtle relation between strongly stable sets and $\mathcal{SCR}_d(\phi)$ has been explained:

\begin{teorema} \label{teoremann}
	If $\phi:X\times\R\rightarrow X$ is a continuous flow on a compact metric space, then 
	\begin{equation} \label{eccola}
	\mathcal{SCR}_d(\phi)=\bigcap \{ B\cup B^{\bullet} :\ B \text{ is strongly stable} \}.
	\end{equation}
\end{teorema}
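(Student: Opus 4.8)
The plan is to prove the two inclusions separately, in both cases exploiting a single auxiliary object: for $x,y\in X$ and $T>0$, the strong-chain barrier
\[
  \ell_T(x,y)=\inf\Big\{\sum_{i=1}^n d(\phi_{t_i}(x_i),x_{i+1})\ :\ (x_i,t_i)_{i=1}^n\text{ a strong chain, }t_i\ge T,\ x_1=x,\ x_{n+1}=y\Big\}.
\]
First I would record its three elementary properties. $(a)$ The map $\ell_T(x,\cdot)$ is $1$-Lipschitz, i.e. $\ell_T(x,z)\le\ell_T(x,y)+d(y,z)$, since replacing the final endpoint $y$ of a chain by $z$ changes its cost by at most $d(y,z)$ by the triangle inequality. $(b)$ One has $\ell_T(x,\phi_s(y))\le\ell_T(x,y)$ for $s\ge T$, obtained by appending the flow node $(y,s)$ at zero extra cost. $(c)$ By definition, $x\in\mathcal{SCR}_d(\phi)$ if and only if $\ell_T(x,x)=0$ for every $T>0$.

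For the inclusion $\bigcap\{B\cup B^{\bullet}\}\subseteq\mathcal{SCR}_d(\phi)$ I would argue by contraposition, constructing a strongly stable set by hand. If $x\notin\mathcal{SCR}_d(\phi)$, property $(c)$ gives $T_0>0$ and $c_0:=\ell_{T_0}(x,x)>0$. Set $V:=\min(\ell_{T_0}(x,\cdot),c_0)$ and $U_\eta:=\{y\in X: V(y)\le\eta\}$ for $\eta\in(0,1)$, and define $B:=\bigcap_{\eta\in(0,1)}\omega(U_\eta)$. I then check the axioms of Definition \ref{s s set} with $T(\eta)\equiv T_0$: axiom $(i)$ is immediate from $(a)$, since $d(z,U_\eta)<\lambda-\eta$ yields $V(z)\le V(y)+d(z,y)<\eta+(\lambda-\eta)=\lambda$ for a suitable $y\in U_\eta$, hence $z\in U_\lambda$; axiom $(ii)$ holds by the very definition of $B$; and axiom $(iii)$ follows from $(b)$, which gives $\phi_{[T_0,\infty)}(U_\eta)\subseteq U_\eta$ with $U_\eta$ closed. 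Finally this $B$ separates $x$: since $V(x)=c_0>\eta$ for small $\eta$ one has $x\notin U_\eta\supseteq\omega(U_\eta)$, so $x\notin B$; and since $\ell_{T_0}(x,y)=0$ for every $y\in\omega(x)$ (use one-jump chains $\phi_{t_k}(x)\to y$ with $t_k\to\infty$), one gets $\omega(x)\subseteq U_\eta$ for all $\eta$, whence $\omega(x)\subseteq B$ by invariance of $\omega(x)$. Thus $x\notin B$ while $\omega(x)\cap B=\omega(x)\ne\emptyset$, that is $x\notin B\cup B^{\bullet}$.

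For the reverse inclusion $\mathcal{SCR}_d(\phi)\subseteq B\cup B^{\bullet}$, fix a strongly stable $B$ with family $(U_\eta)$ and take $x$ with $\omega(x)\cap B\ne\emptyset$ and $x\notin B$; I must show $x\notin\mathcal{SCR}_d(\phi)$. Stability together with \cite{BF1}[Lemma 4.1] upgrades $\omega(x)\cap B\ne\emptyset$ to $\omega(x)\subseteq B$. Since $x\notin B=\bigcap_\eta\omega(U_\eta)$, axiom $(ii)$ furnishes a level $\eta_2$ with $x\notin\omega(U_{\eta_2})$, and compactness then yields $s_0>0$ and $\rho>0$ with $d(x,\phi_t(y))\ge\rho$ for all $y\in U_{\eta_2}$ and $t\ge s_0$. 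On the other hand $\omega(x)\subseteq B\subseteq\mathrm{int}\,U_{\eta_2/2}$, so the forward orbit of $x$ eventually lies in $U_{\eta_2/2}$. Choosing $T$ larger than $s_0$, than the entrance time of the orbit of $x$ into $U_{\eta_2/2}$, and than $\sup_{\eta\in[\eta_2/2,\eta_2]}T(\eta)$, and taking $\varepsilon<\min(\eta_2/2,\rho)$, I claim there is no strong $(\varepsilon,T)$-chain from $x$ to $x$: the first flow maps $x$ into $U_{\eta_2/2}$, and then axioms $(i)$ and $(iii)$ trap the chain inside $U_{\eta_2/2+\varepsilon}\subseteq U_{\eta_2}$, because each flow step does not increase the current level (by $(iii)$) while each jump of size $\delta$ raises it by at most $\delta$ (by $(i)$), and the jumps total less than $\varepsilon$. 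Hence every $\phi_{t_i}(x_i)$ lies in $U_{\eta_2}$, so the closing jump satisfies $d(\phi_{t_n}(x_n),x)\ge\rho>\varepsilon$, contradicting that the total cost is $<\varepsilon$. Therefore $x\notin\mathcal{SCR}_d(\phi)$.

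The main obstacle is the last inclusion, where the three axioms must cooperate at once: $(ii)$ provides the quantitative separation $\rho$ between $x$ and the attracted region, while $(i)$ and $(iii)$ are exactly what prevents a cheap chain from leaking out of $U_{\eta_2}$ and thus escaping that separation. Making the trapping rigorous calls for a short induction along the chain together with a right-continuity remark $U_\eta=\bigcap_{\lambda>\eta}U_\lambda$, needed to pass from the strict estimate in $(i)$ to membership in the closed set $U_{\eta_2}$. In the construction for the other inclusion the only substantive check is that $V$ is genuinely $1$-Lipschitz, which is precisely what matches the metric radius $\lambda-\eta$ appearing in axiom $(i)$.
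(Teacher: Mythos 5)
Your proof should be compared with \cite{BF1}[Theorem 4.2] rather than with anything in this paper: the text quotes Theorem \ref{teoremann} from that source and contains no proof of it. Your argument is self-contained, essentially correct, and runs parallel to the machinery of \cite{BF1} that resurfaces here in Lemma \ref{secondo}: your barrier $\ell_T(x,\cdot)$ is a Fathi--Pageault-type chain cost, its sublevel sets $U_\eta=\{V\le\eta\}$ play the role of the sets $\bar{\Omega}(x,\varepsilon,T)$, and your set $B=\bigcap_{\eta}\omega(U_\eta)$ is the same kind of object as the strongly stable set $\tilde{B}=\omega(\bar{\Omega}(\tilde{C},\varepsilon,T))$ used there. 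What your version buys is economy: one function $\ell_T$ drives both inclusions, the verification of axioms $(i)$--$(iii)$ of Definition \ref{s s set} reduces to the $1$-Lipschitz property and subadditivity under appending flow segments, and the hard inclusion becomes a clean trapping induction along a hypothetical chain. The checks of axioms $(i)$, $(ii)$, $(iii)$ with $T(\eta)\equiv T_0$ are all correct, as is the separation $\omega(x)\subseteq B$, $x\notin B$ in the contrapositive direction.

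Two points need one-line repairs. First, in the trapping argument your choice of $T$ larger than $\max(s_0,\tau_0)$ (with $\tau_0$ the entrance time of the orbit of $x$ into $U_{\eta_2/2}$) does not cover chains of length $n=1$: there the closing jump is $d(\phi_{t_1}(x),x)$, and to apply the estimate $d(x,\phi_t(y))\ge\rho$ you must write $\phi_{t_1}(x)=\phi_{s_0}\bigl(\phi_{t_1-s_0}(x)\bigr)$ with $\phi_{t_1-s_0}(x)\in U_{\eta_2}$, which requires $t_1-s_0\ge\tau_0$; take $T>s_0+\tau_0$, or alternatively note that a closing jump smaller than $\varepsilon<\eta_2/2$ forces $x\in U_{\eta_2}$ via axiom $(i)$, after which the separation applies to $x$ itself. (For $n\ge 2$ your induction already places $x_n\in U_{\eta_2}$, so those cases are fine.) Second, the ``right-continuity remark'' $U_\eta=\bigcap_{\lambda>\eta}U_\lambda$ is not a consequence of Definition \ref{s s set} --- nested closed balls whose radius, as a function of $\eta$, increases with slope at least one but jumps, satisfy axiom $(i)$ while failing right-continuity --- so you may not invoke it; but you do not need it: since the total jump cost is strictly below $\varepsilon$ and the chain is finite, distribute a slack $\epsilon'>0$ over the $n$ jumps and apply axiom $(i)$ in the form $d(x_{i+1},U_{\mu_i})\le\delta_i<(\mu_i+\delta_i+\epsilon')-\mu_i$, choosing $\epsilon'$ with $\eta_2/2+\varepsilon+n\epsilon'\le\eta_2$. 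With these two emendations the proof is complete.
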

\noindent The proof of the main result of this paper is based on the above theorem. 

\section{Proof of Theorem \ref{main thm}}
\subsection{A Lyapunov function for $(B, B^{\bullet})$.} 
~\newline
Let $\phi:X\times\R \rightarrow X$ be a continuous flow on a compact metric space $(X,d)$. \\
In this section, for every pair $(B,B^{\bullet})$ with $B$ strongly stable, we construct a Lyapunov function for $\phi$ which is strictly decreasing on $X\setminus(B\cup B^{\bullet})$. 
\noindent 

 \begin{lemma} \label{Jim}
Let $B \subset X$ be a strongly stable set  and $(U_{\eta})_{\eta\in(0,1)}$ be a family  of closed nested neighborhoods of $B$ as in Definition \ref{s s set}. \\
If $B^\bullet \ne \emptyset$ 
then there exists an $\eta_0 \in (0,1)$ such that
\begin{equation} \label{Jim formula}
B^*_{\eta_0} :=  \{ x\in X :\ \forall t\geq 0,\ \phi_t(x)\notin U_{\eta_0} \}
\end{equation}
is nonempty. The set $B^*_{\eta_0}$ is forward invariant, $B^*_{\eta_0} \subseteq B^{\bullet}$ and $B \cap \text{cl}(B^*_{\eta_0})=\emptyset$.
\end{lemma}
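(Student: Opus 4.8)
The plan is to locate $\eta_0$ by inspecting the $\omega$-limit set of a point of $B^{\bullet}$. Fix $x\in B^{\bullet}$ (possible since $B^{\bullet}\ne\emptyset$) and set $K:=\omega(x)$, a nonempty, compact, invariant set with $K\cap B=\emptyset$. I claim there is an $\eta_0\in(0,1)$ with $K\cap U_{\eta_0}=\emptyset$. Granting this, every $y\in K$ satisfies $\phi_t(y)\in K$ for all $t\ge 0$ by invariance of $\omega$-limit sets, hence $\phi_t(y)\notin U_{\eta_0}$ for all $t\ge 0$, so $y\in B^*_{\eta_0}$; since $K\ne\emptyset$ this yields $B^*_{\eta_0}\ne\emptyset$.

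The crux, and the step I expect to be the \emph{main obstacle}, is the claim $K\cap U_{\eta_0}=\emptyset$ for some $\eta_0$. I would argue by contradiction: if $K\cap U_\eta\ne\emptyset$ for every $\eta$, choose $\eta_n\downarrow 0$ and $p_n\in K\cap U_{\eta_n}$. By compactness of $K$ a subsequence converges to some $p\in K$, and since the $U_\eta$ are closed and nested ($U_\eta\subseteq U_\lambda$ for $\eta<\lambda$, which follows from property $(i)$), the limit $p$ lies in $U_\mu$ for every $\mu\in(0,1)$. For each fixed $\mu$, from $p\in U_\mu$ one gets $\omega(p)\subseteq\omega(U_\mu)$: indeed any $q=\lim_j\phi_{s_j}(p)$ with $s_j\to\infty$ lies in $\text{cl}(\phi_{[s,\infty)}(U_\mu))$ for every $s\ge 0$, because eventually $s_j\ge s$ and $\phi_{s_j}(p)\in\phi_{[s,\infty)}(U_\mu)$. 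Hence by property $(ii)$, $\omega(p)\subseteq\bigcap_\mu\omega(U_\mu)=B$. But $\omega(p)$ is nonempty and, as $p\in K$ with $K$ closed and invariant, $\omega(p)\subseteq K$; thus $\emptyset\ne\omega(p)\subseteq K\cap B$, contradicting $K\cap B=\emptyset$. I note this step uses only $(ii)$, the nesting, and compactness.

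It remains to verify the three structural properties, all elementary once $U_{\eta_0}$ is regarded as a closed neighborhood of $B$, so that $B\subseteq\text{int}(U_{\eta_0})$. For forward invariance: if $x\in B^*_{\eta_0}$ and $s\ge 0$, then $\phi_t(\phi_s(x))=\phi_{t+s}(x)\notin U_{\eta_0}$ for all $t\ge 0$ since $t+s\ge 0$, so $\phi_s(x)\in B^*_{\eta_0}$. For the inclusion $B^*_{\eta_0}\subseteq B^{\bullet}$: the forward orbit of any $x\in B^*_{\eta_0}$ remains in the open set $X\setminus U_{\eta_0}$, whence $\omega(x)\subseteq\text{cl}(X\setminus U_{\eta_0})=X\setminus\text{int}(U_{\eta_0})$, which is disjoint from $B\subseteq\text{int}(U_{\eta_0})$, so $\omega(x)\cap B=\emptyset$. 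Finally, taking $t=0$ shows $B^*_{\eta_0}\subseteq X\setminus U_{\eta_0}$, hence $\text{cl}(B^*_{\eta_0})\subseteq X\setminus\text{int}(U_{\eta_0})$, again disjoint from $B$; therefore $B\cap\text{cl}(B^*_{\eta_0})=\emptyset$.
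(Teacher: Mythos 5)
Your proof is correct, and it follows the same opening move as the paper --- fix $x\in B^\bullet$ and use $B=\bigcap_{\eta}\omega(U_\eta)$ plus a nested-compacta argument to extract $\eta_0$ --- but the mechanism and the witness differ. The paper separates $\omega(x)$ from $\omega(U_{\eta_0})$ (rather than from $U_{\eta_0}$ itself) and then invokes property $(iii)$ of Definition \ref{s s set}: since $U_{\eta_0}$ is eventually forward invariant, if the orbit of $x$ ever entered $U_{\eta_0}$ it would eventually remain there, forcing $\omega(x)\subseteq\omega(U_{\eta_0})$, a contradiction; hence the paper's witness for nonemptiness is $x$ itself, and in fact every point of $B^\bullet$ lies in $B^*_{\eta_0}$ for a suitable $\eta_0$ depending on the point. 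You instead separate $K=\omega(x)$ from the neighborhoods $U_\eta$ directly, and your witness is $K$: any accumulation point $p$ of points $p_n\in K\cap U_{\eta_n}$ lies in every $U_\mu$, so $\omega(p)\subseteq\bigcap_\mu\omega(U_\mu)=B$ while also $\omega(p)\subseteq K$, contradicting $K\cap B=\emptyset$. This buys you a proof of nonemptiness using only $(ii)$, nesting, and compactness --- property $(iii)$ is not needed at that step --- at the mild cost of concluding $\omega(x)\subseteq B^*_{\eta_0}$ rather than $x\in B^*_{\eta_0}$; nothing in the subsequent construction (Theorem \ref{OlgaAnna}) requires the stronger statement. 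Your verification of forward invariance, of $B^*_{\eta_0}\subseteq B^\bullet$, and of $B\cap\text{cl}(B^*_{\eta_0})=\emptyset$ --- which the paper leaves implicit --- is complete; the key observation $B\subseteq\text{int}(U_{\eta_0})$, which follows from $(U_\eta)$ being neighborhoods (or from property $(i)$ applied to any $\eta<\eta_0$), is exactly what makes the last two properties immediate.
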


\begin{proof}
Let $x$ be an element of $B^\bullet$, so $\omega(x) \cap B = \emptyset$.  Since  $B=\bigcap_{\eta\in(0,1)}\omega(U_{\eta})$, there exists $\eta_0$ such that $\omega(x) \cap \omega(U_{\eta_0}) = \emptyset$.  Since  $U_{\eta_0}$ is eventually forward invariant, this means that $\phi_t(x) \not\in U_{\eta_0}$ for all $t\ge0$.  
\end{proof}
\noindent 
We observe that, since the $U_\eta$'s are nested,
$$
B^*_{\eta_0}=\{x\in X :\ \forall t\geq 0,\ \phi_t(x)\notin\bigcup_{\eta\in(0,\eta_0]}U_{\eta}\}.
$$
\begin{teorema} \label{OlgaAnna}
	Let $B^\bullet \ne \emptyset$ and $B^*_{\eta_0}$ be as in formula (\ref{Jim formula}) of Lemma \ref{Jim}. \\
	Then there exists a continuous Lyapunov function $h:X\rightarrow\R$ for $\phi$ such that
	
	\begin{itemize}
		\item[$(i)$] $h^{-1}(0)=B$.\vspace{5pt}
		
		\item[$(ii)$] $h^{-1}(1)= \text{cl}(B^*_{\eta_0})$.\vspace{5pt}
		
		\item[$(iii)$] $h$ is strictly decreasing on the set $X\setminus(B\cup B^{\bullet})$.
	\end{itemize}
\end{teorema}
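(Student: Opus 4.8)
The plan is to follow Conley's two--step scheme: first produce a \emph{non--strict} continuous Lyapunov function $g:X\to[0,1]$ whose level sets are already the prescribed ones, namely $g^{-1}(0)=B$ and $g^{-1}(1)=\mathrm{cl}(B^*_{\eta_0})$, and then to promote the weak monotonicity of $g$ into \emph{strict} decrease off $B\cup B^{\bullet}$ by an exponential time--average. The separation $B\cap\mathrm{cl}(B^*_{\eta_0})=\emptyset$ and the forward invariance of $B^*_{\eta_0}$ furnished by Lemma \ref{Jim} are what make the two prescribed values simultaneously attainable.

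To build $g$ I would first extract from the nested family $(U_\eta)$ a $1$--Lipschitz ``depth'' function $p(x):=\inf\{\eta\in(0,1):x\in U_\eta\}$; property $(i)$ of Definition \ref{s s set} yields at once the estimate $|p(x)-p(y)|\le d(x,y)$, and by construction $p\equiv 0$ on $B$. Since $p$ need not be monotone along orbits (the $U_\eta$ are only eventually forward invariant), I would pass to a supremum along the forward orbit, combined with the distance to the repeller, e.g.\ a function of the form $g(x)=\sup_{t\ge0}\,\ell(\phi_t(x))$ with $\ell$ continuous, $\ell^{-1}(0)=B$ and $\ell^{-1}(1)=\mathrm{cl}(B^*_{\eta_0})$ (such an $\ell$ exists because these two closed sets are disjoint). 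Taking the supremum over $t\ge0$ makes $g$ non--increasing along orbits, hence a Lyapunov function, and one checks $g=0$ exactly on $B$ (orbits issued from $B$ stay in $B$, where $\ell=0$) and $g=1$ on $\mathrm{cl}(B^*_{\eta_0})$, which is forward invariant.

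For the regularization I would set
\[
h(x)=\int_0^{\infty}e^{-s}\,g(\phi_s(x))\,ds .
\]
This $h$ is continuous, is again a Lyapunov function, and has the same level sets as $g$: since $0\le g\le 1$ and $g$ is continuous, $h(x)=0$ (resp.\ $h(x)=1$) forces $g\equiv 0$ (resp.\ $g\equiv 1$) along the whole forward orbit, whence $x\in B$ (resp.\ $x\in\mathrm{cl}(B^*_{\eta_0})$). Differentiating along the flow gives the identity $\frac{d}{dt}h(\phi_t(x))=h(\phi_t(x))-g(\phi_t(x))\le 0$, and I would deduce strictness exactly on $X\setminus(B\cup B^{\bullet})$: for such $x$ one has $\omega(x)\subseteq B$, so $g(\phi_t(x))\to 0$ as $t\to+\infty$ while $g(x)>0$, forcing $g$ to be non--constant on the forward orbit and hence the derivative above to be strictly negative for small $t$.

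The main obstacle is the continuity of the supremum $g(x)=\sup_{t\ge0}\ell(\phi_t(x))$ together with the \emph{exact} identification $g^{-1}(1)=\mathrm{cl}(B^*_{\eta_0})$: a priori a supremum over the non--compact half--line is only lower semicontinuous, and one must rule out spurious points whose forward orbit merely accumulates on the repeller without meeting it. Here the defining properties of a strongly stable set are decisive. The eventual forward invariance $(iii)$, $\mathrm{cl}(\phi_{[T(\eta),\infty)}(U_\eta))\subseteq U_\eta$, shows that once an orbit enters $U_{\eta_0}$ it is eventually trapped there, so $B^*_{\eta_0}$ absorbs its own basin and the relevant supremum is effectively taken over a compact time window, uniformly on compact families; combined with the controlled growth $(i)$ this should deliver both the continuity of $g$ and the precise value of its upper level set. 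I expect verifying these uniformity statements to be the technical heart of the argument.
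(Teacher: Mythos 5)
Your overall architecture is exactly the paper's: a ``depth'' function built from the nested family $(U_\eta)$, an orbitwise supremum $g(x)=\sup_{t\ge0}\ell(\phi_t(x))$ to force monotonicity, and the exponential average $h(x)=\int_0^{\infty}e^{-s}g(\phi_s(x))\,ds$ with strictness on $X\setminus(B\cup B^\bullet)$ deduced from $\omega(x)\subseteq B$ and $g(x)>0$. That final strictness step, and the identity $\frac{d}{dt}h(\phi_t(x))=h(\phi_t(x))-g(\phi_t(x))$, are fine. But there is a genuine gap, and it sits precisely where you park it as ``the technical heart'': you permit $\ell$ to be \emph{any} continuous function with $\ell^{-1}(0)=B$ and $\ell^{-1}(1)=\mathrm{cl}(B^*_{\eta_0})$ (a Urysohn-type separator), and for such a generic $\ell$ the two claims you need are actually false, not merely unverified. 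First, the top level set: $h(x)=1$ iff $g\equiv1$ along the forward orbit, i.e.\ iff $\sup_{t\ge s}\ell(\phi_t(x))=1$ for every $s$, which by compactness is equivalent to $\omega(x)\cap\mathrm{cl}(B^*_{\eta_0})\ne\emptyset$. This condition can hold for $x\notin\mathrm{cl}(B^*_{\eta_0})$: a point whose orbit enters $U_{\eta_0}$ (hence is \emph{not} in $B^*_{\eta_0}$) is eventually trapped in $U_{\eta_0}$ by property $(iii)$ of Definition \ref{s s set}, but its $\omega$-limit set may still touch $\partial U_{\eta_0}\cap\mathrm{cl}(B^*_{\eta_0})$; nothing in Lemma \ref{Jim} rules this out. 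So with a bare Urysohn $\ell$ you would get $h^{-1}(1)\supsetneq\mathrm{cl}(B^*_{\eta_0})$. Second, continuity of $g$ at points of $B$ does not follow from your trapping remark (which only handles points off $B\cup\mathrm{cl}(B^*_{\eta_0})$, where the supremum reduces to a compact time window): near $B$ you must bound the supremum of $\ell$ along the \emph{entire} forward orbit, and a generic separating $\ell$ has no relation between its small sublevel sets and the forward-invariance structure of the $U_\eta$'s.

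The paper closes both holes by committing to the specific depth function you mention in passing and then discard: $l(x)=\eta/\eta_0$ where $\eta=\inf\{\lambda\in(0,\eta_0):x\in U_\lambda\}$, and $l(x)=1$ otherwise, so that the sublevel sets of $l$ \emph{are} the sets $U_\lambda$ (note it only demands $\mathrm{cl}(B^*_{\eta_0})\subseteq l^{-1}(1)$, not equality --- the exact identification is produced dynamically at the level of $h$, not topologically at the level of $\ell$). With this choice: (a) continuity of $k=\sup_{t\geq 0}l\circ\phi_t$ at $B$ holds because given $\varepsilon$ one picks $U_\eta\subseteq\{l<\varepsilon\}$ and property $(iii)$ gives $\phi_{[T(\eta),+\infty)}(U_\eta)\subseteq U_\eta$, so $k<\varepsilon$ on the neighborhood $\phi_{[T(\eta),+\infty)}(U_\eta)$ of $B$; (b) if $h(x)=1$ and the orbit ever met some $U_{\bar\eta}$ with $\bar\eta<\eta_0$, then fixing $\tilde\eta\in(\bar\eta,\eta_0)$ the tail of the orbit lies in $U_{\tilde\eta}$, where $l\le\tilde\eta/\eta_0<1$, forcing $k<1$ on the tail and $h(x)<1$ --- a contradiction; hence the orbit avoids the small $U_\lambda$'s and $x\in B^*_{\eta_0}$. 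The accumulation-on-the-boundary pathology above cannot occur because $l$, unlike a Urysohn function, is quantitatively tied to the trapping regions. So your skeleton is the correct one, but the two ``uniformity statements'' you defer are the actual content of the proof, and they are obtainable only for the depth-based $\ell$, not for an arbitrary continuous separator.
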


\begin{proof}
	We first define the function $l:X\rightarrow\R$ as follows:
	\begin{equation}
	l(x):=\begin{cases}
	\dfrac{\eta}{\eta_0}\qquad \text{if }\eta = \inf_{\lambda \in (0,\eta_0)} \lambda \text{ such that } x \in U_{\lambda} ,\\
	\\
	1\qquad\text{otherwise.}
	\end{cases}
	\end{equation}
	\noindent The function $l$ is continuous, $B=l^{-1}(0)$, $\text{cl}(B^*_{\eta_0})\subseteq l^{-1}(1)$ and $l(X)\subseteq[0,1]$. Define now the function $k:X\rightarrow\R$ by
	\begin{equation}\label{k def}
	k(x):=\sup_{t\geq 0}\{ l(\phi_t(x)) \}.
	\end{equation}
	\noindent Since both $B$ and $\text{cl}(B^*_{\eta_0})$ are forward invariant, it follows that $B=k^{-1}(0)$ and $\text{cl}(B^*_{\eta_0})\subseteq k^{-1}(1)$. Moreover, $k(X)\subseteq [0,1]$. We show now that the function $k$ is continuous.
	\begin{itemize}
		\item[$(a)$] $k$ is continuous on $\text{cl}(B^*_{\eta_0})$. Since $l$ is continuous and since $\text{cl}(B^*_{\eta_0})\subseteq l^{-1}(1)$, for every $\varepsilon>0$ there exists a neighborhood $V$ of $\text{cl}(B^*_{\eta_0})$ such that
		$$
		\abs{l(y)-l(x)}<\varepsilon
		$$
		$\forall y\in V$ and $\forall x\in \text{cl}(B^*_{\eta_0})$. In particular, for every $y\in V$ we have $1-l(y)<\varepsilon$. Observe that $l\leq k\leq 1$. Thus we deduce that
		$$
		\abs{k(y)-k(x)}=1-k(y)\leq 1-l(y)<\varepsilon
		$$
		$\forall y\in V$ and $\forall x\in \text{cl}(B^*_{\eta_0})$. This concludes the proof of the continuity of $k$ on $\text{cl }(B^*_{\eta_0})$.
		\item[$(b)$] $k$ is continuous on $B$. Since $l$ is continuous and since $B=l^{-1}(0)$, for every $\varepsilon>0$ there exists a neighborhood $V$ of $B$ such that $l_{\vert V}<\varepsilon$. Corresponding to $V$ and up to restricting $V$, there exists $\eta\in(0,\eta_0)$ such that $U_{\eta}\subseteq V$. From property $(iii)$ of Definition \ref{s s set}, there is $T(\eta)>0$ so that
		\begin{equation}\label{cont B}
		\phi_{[T(\eta),+\infty)}(U_{\eta})\subseteq U_{\eta}\subseteq V.
		\end{equation}
		Observe that $\phi_{[T(\eta),+\infty)}(U_{\eta})$ is a neighborhood of $B$. From \eqref{cont B}, the function $k$ is $\varepsilon$-bounded on $\phi_{[T(\eta),+\infty)}(U_{\eta})$. This proves the continuity of $k$ on $B$.
		\item[$(c)$] $k$ is continuous on $X\setminus(B\cup \text{cl }(B^*_{\eta_0}))$. 
	
		Fix $x\in X\setminus(B\cup \text{cl }(B^*_{\eta_0}))$. Thus, there exists $\tau(x)\geq 0$ such that $\phi_{\tau(x)}(x)\in \bigcup_{\eta\in(0,\eta_0)}U_{\eta}$. Let $\bar{\eta}\in(0,\eta_0)$ be such that $\bar\eta=\inf_{\lambda\in(0,\eta_0)}\lambda$ so that $\phi_{\tau(x)}(x)\in U_{\bar{\eta}}$. Then, by property $(i)$ of Definition \ref{s s set} and the continuity of the flow, fixed $ \bar{\eta}<\tilde{\eta} <\eta_0$, there exists a neighborhood  $V$ of $x$ such that $\phi_{\tau(x)}(V) \subseteq U_{\tilde{\eta}}$. Therefore, by property $(iii)$ of Definition \ref{s s set}, we have
		$$\phi_{[\tau(x) + T(\tilde{\eta}),+\infty)}(V)\subseteq U_{\tilde{\eta}}$$
		and consequently
		$$l\vert_{\phi_{[\tau(x) + T(\tilde{\eta}),+\infty)}(V)} \le \dfrac{\tilde{\eta}}{\eta_0}.$$
		
		\noindent Hence, for every $y\in V$ we have
		$$
		k(y)=\sup_{t\geq 0}\{l(\phi_t(y))\}=\max_{t\in [0,\tau(x)+T(\tilde{\eta})]}\{l(\phi_t(y))\}.
		$$
		\noindent By the continuity of $y\mapsto \max_{t\in[0,\tau(x)+T(\tilde{\eta})]}\{l(\phi_t(y))\}$, we conclude that $k$ is continuous at $x\in X\setminus(B\cup \text{cl }(B^*_{\eta_0}))$.
	\end{itemize}
	By its definition in \eqref{k def}, it immediately follows that $k$ is a Lyapunov function for $\phi$. \\
	\noindent Define now the function $h:X\rightarrow\R$ as
	\begin{equation}
	h(x):=\int_0^{+\infty}e^{-s}k(\phi_s(x))\, ds.
	\end{equation}
	\noindent The function $h$ is continuous and decreasing along trajectories. Moreover, on one hand $h(x)=0$ if and only if $k(\phi_s(x))=0$ for every $s\geq 0$, i.e. if and only if $x\in B$. On the other hand, $h(x)=1$ if and only if $k(\phi_s(x))=1$ for every $s\geq 0$. That is if and only if $\phi_s(x)\notin U_{\eta_0}$
	for every $s\geq 0$, which is the definition of $B^*_{\eta_0}$.\\
	\noindent We finally prove that the function $h$ is strictly decreasing on 
	$$x\in X\setminus(B\cup B^{\bullet}) = \{x \in X \setminus B: \ \omega(x) \subseteq B\}.$$ 
	Indeed, for any $x\in X\setminus(B\cup B^{\bullet})$ and $t>0$, we have

	$$h(\phi_t(x)) - h(x) = \int_0^{+\infty}e^{-s}\left( k(\phi_{s+t}(x))-k(\phi_s(x)) \right)\, ds < 0$$
	since the integral is not identically zero. This concludes the proof. 
\end{proof}
\noindent We recall that the corresponding result for an attractor-repeller pair is contained in \cite{conBOOK}[Section B, page 33]. Our proof follows the main lines of Conley's original idea. However we notice that Conley's Lyapunov function for an attractor-repeller pair is identically zero on the attractor and identically 1 on the repeller. The same does not hold for $(B,B^{\bullet})$, with $B$ strongly stable. In such a case --see also Example \ref{funzione 01}  below-- the Lyapunov function of Theorem \ref{OlgaAnna} assumes all the values between $0$ and $1$ in $B^{\bullet}$.
\begin{esempio} \label{funzione 01} (Example 4.4 in \cite{BF1}) \\
\noindent On the circle $\R/\Z$ endowed with the standard quotient metric, consider the dynamical system of Figure \ref{FigBF}. In the sequel, we denote by $\widehat{XY}$ (resp. $cl(\widehat{XY})$) the clockwise-oriented open (resp. closed) arc from $X$ to $Y$. The arc $cl(\widehat{AB})$ and the points $C$ and $D$ are fixed; on the other points we have a clockwise flow. Then $cl(\widehat{AE})$ is strongly stable with $B^{\bullet} = \widehat{ED} \cup \{D\}$. In such a case, a set $B^*_{\eta_0}$ as in formula (\ref{Jim formula}) of Lemma \ref{Jim} is --for example-- $B^*_{\eta_0} = cl(\widehat{BD})$ and the corresponding Lyapunov function for $(B,B^{\bullet})$ constructed in Theorem \ref{OlgaAnna} equals  $0 \text{ if } x \in cl(\widehat{AE})$ and $\frac{d(x,E)}{d(x,E) + d(x,B)} \text{ if } x \in \widehat{EB} \subseteq B^{\bullet}$. Then, such a function assumes all the values between $0$ and $1$ in $B^{\bullet}$ (see Figure \ref{funzione per h}).
\begin{figure}[h]
	\begin{minipage}{0.45\textwidth}
		\centering
		\begin{tikzpicture}
		\draw (4,4) circle (20mm);
		\draw [ultra thick] (4,6) arc (90:135:2);
		\draw plot [mark=*] coordinates
		{(4,6)}; 
		\node at (4,6.3) {B};
		\draw plot [mark=*] coordinates
		{(2.586,5.414)};
		\node at (2.286,5.5) {A};
		\draw plot [mark=*] coordinates
		{(3.1,5.8)};
		\node at (3,6.2){E};
		\draw plot [mark=*] coordinates
		{(2.586,2.586)};
		\node at (2.286,2.5) {D};
		\draw plot [mark=*] coordinates
		{(5.414,2.586)};
		\node at (5.714,2.5) {C};
		\draw [-latex] (4,6) arc (90:45:2);
		\draw [-latex] (6,4) arc (0:-90:2);
		\draw [-latex] (2.586,2.586) arc (225:150:2);
		\end{tikzpicture}
		\caption{The dynamics of Example \ref{funzione 01}.}
		\label{FigBF}
	\end{minipage}
	\begin{minipage}{0.5\textwidth}
		\begin{center}
			\includegraphics[scale=0.32]{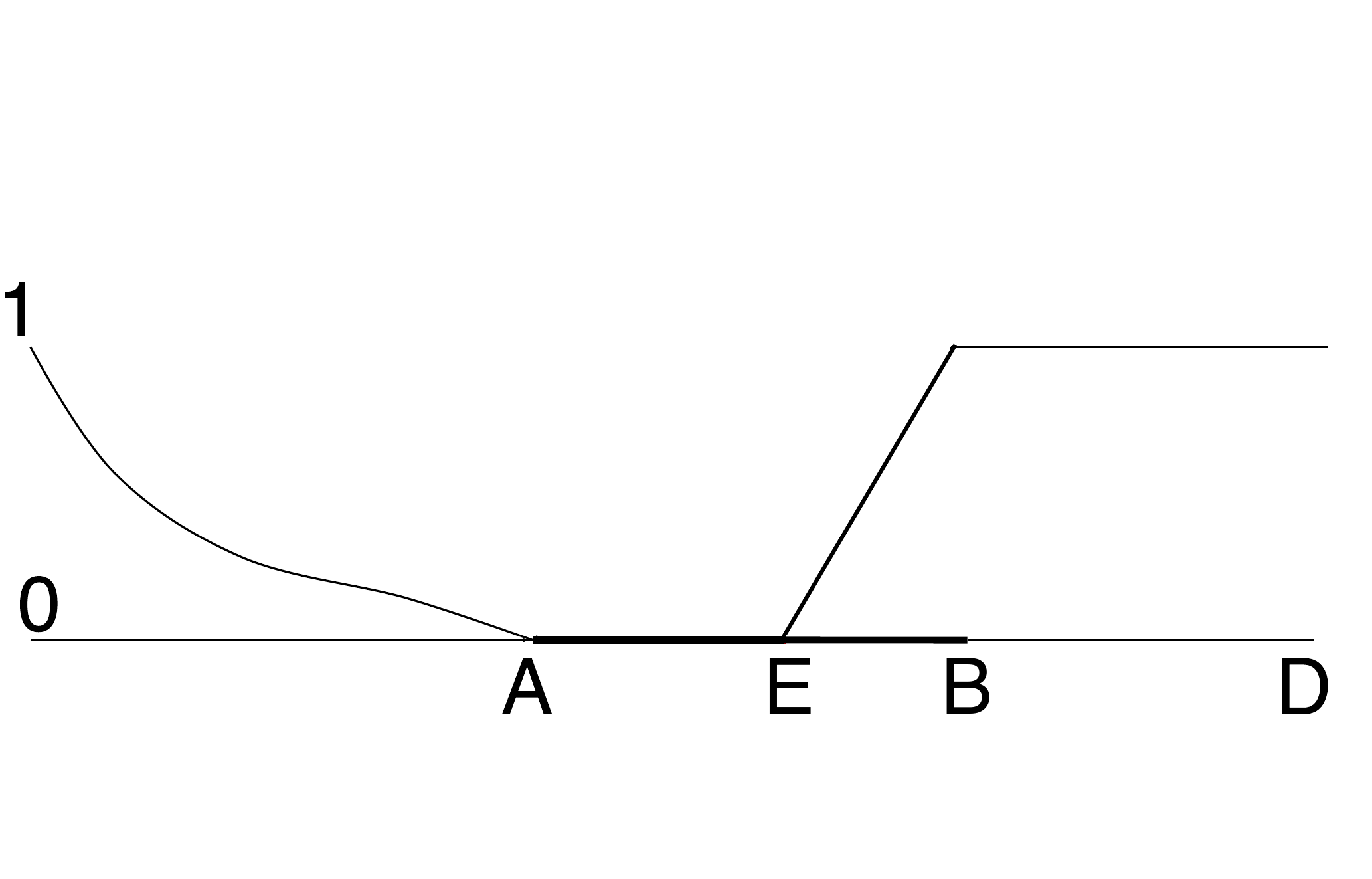}
			\caption{The Lyapunov function for the dynamics of Example \ref{funzione 01}.}
			\label{funzione per h}
		\end{center}
	\end{minipage}
\end{figure} 

\end{esempio}
\subsection{Reduction to a countable set of $(B,B^{\bullet})$.} \label{STABLE}
~\newline
\noindent Conley's proof of Fundamental Theorem of Dynamical Systems uses the facts that  attractor-repeller pairs are closed and that there are countably many such pairs. We notice that the pairs $(B,B^{\bullet})$, involved in the decomposition of $\mathcal{SCR}(\phi)$, aren't as well behaved. Firstly, different $B$'s can give the same $B \cup B^\bullet$: the obvious example is the identity flow on a connected compact metric space. Moreover, $B \cup B^\bullet$ isn't necessarily closed, there can be uncountably many different $B \cup B^\bullet$'s and, finally, the structure $B \cup B^{\bullet}$ is not conserved  by finite intersection. \\
~\newline
\noindent The first part of this section is devoted to showing --by simple examples-- these facts.
\begin{esempio} \label{es union open}
On the compact space $[0,1]^2\subset\R^2$ endowed with the standard metric, consider the dynamical system of Figure \ref{JimFig}.
\begin{figure}[h]
	\centering
	\includegraphics[scale=0.20]{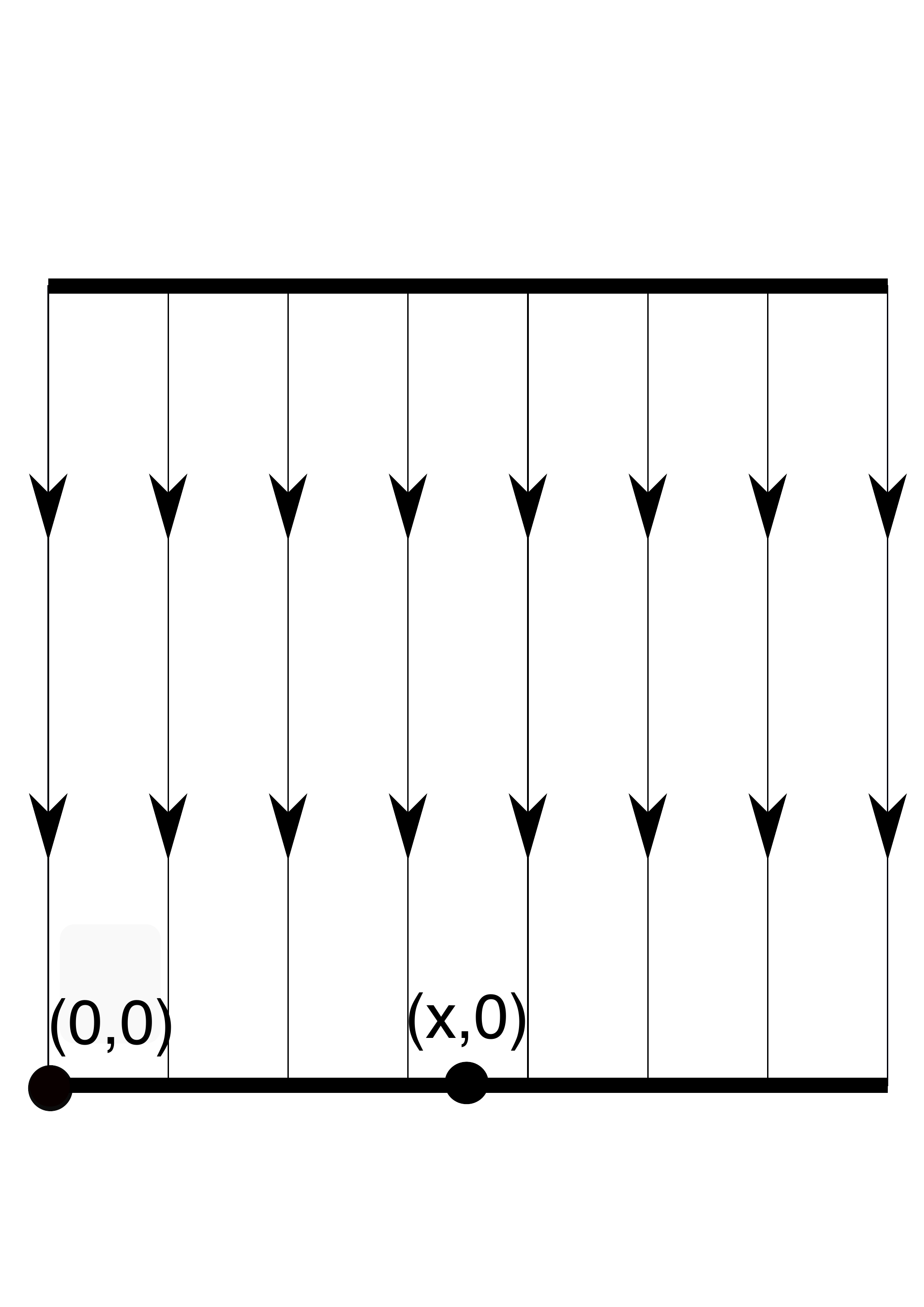}
	\caption{Dynamics of Example \ref{es union open}.}
	\label{JimFig}
\end{figure}
\noindent Segments $[0,1] \times\{0\}$ and $[0,1]\times\{1\}$ are fixed; moreover, for every $x\in[0,1]$, on each open vertical interval $\{x\}\times(0,1)$ we have a north-south flow. In such a case, $B = \{(0,0)\}$ is strongly stable with complementary $B^\bullet = ((0,1]\times[0,1])\cup \{(0,1)\}$, and the union $B\cup B^\bullet$ is not closed. Moreover, every closed segment $B_x$ connecting $(0,0)$ to $(x,0)$, $x \in [0,1]$, is strongly stable, and the collection $(B_x)_{x \in [0,1]}$ gives uncountably many different $B_x \cup B_x^\bullet$'s.
\end{esempio}
\begin{esempio}\label{example intersection} Let us consider again Example \ref{funzione 01}. It is straightforward to see that there exist $B_0$ and $B_1$ strongly stable sets such that
$$B_0 \cup B_0^{\bullet} = cl(\widehat{AD}) \qquad \text{and} \qquad B_1 \cup B_1^{\bullet} = cl(\widehat{DC}).$$
However $cl(\widehat{AD}) \cap cl(\widehat{DC}) = \{D\} \cup cl(\widehat{AC})$ is not the union of a strongly stable set and its complementary.
\end{esempio}

\noindent The aim of the second part of the section is proving the next
\begin{teorema} \label{Teo Jim}
	Let $\phi:X\times\R\rightarrow X$ be a continuous flow on a compact metric space $(X,d)$. Then there exists an --at most countable-- collection of strongly stable sets $\{B_n\}$ such that 
\begin{equation} \label{count decom}
\mathcal{SCR}_d(\phi)=\bigcap_n  B_n\cup B_n^{\bullet}.
\end{equation}
\end{teorema}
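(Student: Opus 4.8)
The plan is to extract the countable decomposition from the uncountable one already furnished by Theorem~\ref{teoremann}, by recasting the statement as a covering problem and then invoking the second countability of the compact metric space $X$. For a strongly stable $B$ write $C_B:=X\setminus(B\cup B^{\bullet})$; since $B$ is stable, $C_B=\{x\notin B:\ \omega(x)\subseteq B\}$. Theorem~\ref{teoremann} says precisely that $\{C_B\}_B$ covers $X\setminus\mathcal{SCR}_d(\phi)$, and for \emph{any} subfamily $\{B_n\}$ one has $\bigcap_n(B_n\cup B_n^{\bullet})\supseteq\mathcal{SCR}_d(\phi)$ for free. Thus it suffices to produce a countable subfamily $\{B_n\}$ for which the sets $C_{B_n}$ still cover $X\setminus\mathcal{SCR}_d(\phi)$: the reverse inclusion $\bigcap_n(B_n\cup B_n^{\bullet})\subseteq\mathcal{SCR}_d(\phi)$ then follows, which is exactly (\ref{count decom}).

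The main obstacle is the one flagged by Examples~\ref{es union open} and~\ref{example intersection}: the sets $C_B$ are in general neither open nor closed, so one cannot directly apply the Lindel\"of property to the cover $\{C_B\}$. I would bypass this by showing that every $x\in X\setminus\mathcal{SCR}_d(\phi)$ lies in the \emph{interior} of some $C_{\tilde B}$ with $\tilde B$ strongly stable; the resulting open sets then form an open cover to which second countability applies, and any countable subcover singles out the desired family.

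To produce such an interior point I would start from a strongly stable witness $B$ of $x$, so $x\notin B$ and $\omega(x)\subseteq B$, together with its family $(U_\eta)_{\eta\in(0,1)}$ from Definition~\ref{s s set}. Because the function $l$ of Theorem~\ref{OlgaAnna} satisfies $l^{-1}(0)=B$, one has $\bigcap_{\eta}U_\eta=B$, so $x\notin U_{\eta_1}$ for some $\eta_1$; fix levels $\eta_2<\eta_3<\eta_1$ and set
$$\tilde B:=\bigcap_{\mu>\eta_3}\omega(U_\mu).$$
This $\tilde B$ is strongly stable, witnessed by the index-translated, truncated family $V_\eta:=U_{\eta+\eta_3}$ (extended by $X$), for which properties $(i)$--$(iii)$ are inherited; note that only a \emph{translation} of the index preserves the metric constant $\lambda-\eta$ in property $(i)$, which is why one truncates rather than rescales. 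Since $\tilde B\subseteq\omega(U_{\eta_3'})\subseteq U_{\eta_3'}$ for any $\eta_3'\in(\eta_3,\eta_1)$ and $x\notin U_{\eta_1}\supseteq U_{\eta_3'}$, we get $x\notin\tilde B$. Moreover $\omega(x)\subseteq B\subseteq\operatorname{int}(U_{\eta_2})$ by the collar in property $(i)$, so the forward orbit of $x$ enters $\operatorname{int}(U_{\eta_2})$ at some time $T$; by continuity there is an open $W\ni x$ with $\phi_T(W)\subseteq U_{\eta_2}$ and $W\cap U_{\eta_3'}=\emptyset$.

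Now for every $y\in W$, eventual forward invariance (property $(iii)$) gives $\omega(y)\subseteq\omega(U_{\eta_2})\subseteq\omega(U_\mu)$ for all $\mu>\eta_3$ (here $\eta_2<\eta_3$ is used), hence $\omega(y)\subseteq\tilde B$; and $y\notin\tilde B$ because $W\cap U_{\eta_3'}=\emptyset$ while $\tilde B\subseteq U_{\eta_3'}$. Thus $W\subseteq C_{\tilde B}$, so $x\in\operatorname{int}(C_{\tilde B})$. Letting $x$ vary, the open sets $\{W_x\}$ cover $X\setminus\mathcal{SCR}_d(\phi)$; a countable subcover $\{W_{x_n}\}$ exists by second countability, and the associated strongly stable sets $\{\tilde B_{x_n}\}$ satisfy $\bigcup_n C_{\tilde B_{x_n}}\supseteq X\setminus\mathcal{SCR}_d(\phi)$, giving (\ref{count decom}). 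The delicate points I expect to verify carefully are that $\tilde B$ really is strongly stable (properties $(i)$--$(iii)$ for the truncated family, precisely where the bad behaviour of the examples must be circumvented) and the passage from the pointwise basin condition $\omega(x)\subseteq B$ to the open condition $\phi_T(W)\subseteq U_{\eta_2}$, which is the crux that makes the Lindel\"of argument available.
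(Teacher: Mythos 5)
Your argument is correct in substance and shares the paper's outer skeleton: both proofs reduce (\ref{count decom}) to extracting a countable subcover by separability, after showing that every point $x\notin\mathcal{SCR}_d(\phi)$ lies in the \emph{interior} of some set $X\setminus(\tilde{B}\cup\tilde{B}^\bullet)$ --- in the paper this is packaged as Lemma \ref{secondo} (the equality $\bigcap cl(B\cup B^\bullet)=\bigcap(B\cup B^\bullet)$, i.e.\ $x\notin cl(\tilde{B}\cup\tilde{B}^\bullet)$) followed by Lemma \ref{primo}, while you phrase it as $x\in \operatorname{int}(C_{\tilde{B}})$ plus the Lindel\"of property; these are the same step. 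Where you genuinely diverge is in how the separating strongly stable set $\tilde{B}$ is built. The paper goes back to the strong-chain machinery of \cite{BF1}: from $x\notin\mathcal{SCR}_d(\phi)$ it picks $\varepsilon,T$ with $x\notin\bar{\Omega}(x,\varepsilon,T)$, produces by a compactness/contradiction argument a closed ball $\tilde{C}$ centered at $x$ with $\tilde{C}\cap\bar{\Omega}(\tilde{C},\varepsilon,T)=\emptyset$, and sets $\tilde{B}=\omega(\bar{\Omega}(\tilde{C},\varepsilon,T))$, importing its strong stability and the inclusions $x\notin\tilde{B}$, $\omega(y)\subseteq\tilde{B}$ for $y\in\tilde{C}$ from \cite{BF1} (Example 4.3, Corollary 3.1, Lemma 3.6). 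You instead stay inside the witness pair furnished by Theorem \ref{teoremann} and truncate its level family, $\tilde{B}=\bigcap_{\mu>\eta_3}\omega(U_\mu)$ with translated neighborhoods $V_\eta=U_{\eta+\eta_3}$; this buys a proof that re-invokes no chain-recurrence estimates (only Theorem \ref{teoremann}, monotonicity of $\omega$, and the axioms of Definition \ref{s s set}), at the price of verifying strong stability by hand --- exactly the point you flagged. Two details there need repair, though neither is fatal: (a) the existence of $\eta_1$ with $x\notin U_{\eta_1}$ rests on $\bigcap_\eta U_\eta=B$, which is not literally among axioms $(i)$--$(iii)$ (only $\bigcap_\eta\omega(U_\eta)=B$ is); the paper itself asserts the equivalent fact $l^{-1}(0)=B$ in Theorem \ref{OlgaAnna}, but you can bypass the issue entirely by choosing $\eta_1$ with $x\notin\omega(U_{\eta_1})$, which is available directly from axiom $(ii)$ since $x\notin B$, already yields $x\notin\tilde{B}$ via $\tilde{B}\subseteq\omega(U_{\eta_1})$, and lets you shrink $W$ to avoid the closed set $\tilde{B}$ itself rather than $U_{\eta_3'}$; (b) extending by $X$ only for $\eta\geq 1-\eta_3$ can make $T'(\eta)=T(\eta+\eta_3)$ unbounded on compact subsets of $(0,1)$ whose closure touches $1-\eta_3$, since $T$ is only bounded on compacta of $(0,1)$; truncating at any fixed $c<1-\eta_3$ fixes this, and nestedness guarantees $\bigcap_{\mu\in(\eta_3,\eta_3+c)}\omega(U_\mu)$ still equals $\tilde{B}$, so axiom $(ii)$ survives. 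With these two adjustments your proof is complete and is an attractive alternative to Lemma \ref{secondo}, since it shows the countable reduction needs nothing beyond the decomposition of Theorem \ref{teoremann} and the structure of Definition \ref{s s set} itself.
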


\noindent We will use the following lemmas in the proof of the theorem. 

\begin{lemma} \label{primo}
Let $\{U_\alpha\}$ be a --possibly uncountable-- collection of open sets of a separable space.  Then there exists an --at most countable-- subcollection $\{U_n\}$ such that 
$$\bigcup_n U_n = \bigcup_{\alpha} U_\alpha.$$
\end{lemma}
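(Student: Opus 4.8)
The plan is to exploit the fact that a separable metric space is second countable, which lets me reduce the possibly uncountable union to a countable one by passing through a countable base for the topology. First I would fix a countable base $\{B_k\}_{k\in\N}$ for the space; this is available since, for metric spaces, separability is equivalent to the existence of a countable base. The whole argument is then a version of the Lindel\"of property applied to the open set $\bigcup_\alpha U_\alpha$.

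Next I would isolate those basic open sets that are subordinate to the given family. Concretely, I would set
$$I:=\{k\in\N :\ B_k\subseteq U_\alpha \text{ for some }\alpha\},$$
and for each $k\in I$ select (by countable choice) an index $\alpha(k)$ with $B_k\subseteq U_{\alpha(k)}$. The candidate at-most-countable subcollection is then $\{U_{\alpha(k)}\}_{k\in I}$, which is a genuine subfamily of $\{U_\alpha\}$ indexed by the countable set $I$.

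Finally I would check the two inclusions defining the equality of unions. The inclusion $\bigcup_{k\in I}U_{\alpha(k)}\subseteq\bigcup_\alpha U_\alpha$ is immediate, since every $U_{\alpha(k)}$ is one of the original sets. For the reverse inclusion, given $x\in\bigcup_\alpha U_\alpha$ I would choose $\alpha$ with $x\in U_\alpha$; because $U_\alpha$ is open and $\{B_k\}$ is a base, there is an index $k$ with $x\in B_k\subseteq U_\alpha$, so $k\in I$ and hence $x\in B_k\subseteq U_{\alpha(k)}$. This places $x$ in $\bigcup_{k\in I}U_{\alpha(k)}$ and completes the equality.

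I do not expect a serious obstacle here: the statement is essentially the Lindel\"of property of separable metric spaces, and the only subtlety is the harmless appeal to the axiom of countable choice when selecting the indices $\alpha(k)$. The one point worth stating explicitly in the write-up is the passage from separability to the existence of a countable base, since the lemma is phrased for a ``separable space'' while the construction genuinely uses a countable base.
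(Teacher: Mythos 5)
Your proof is correct and is essentially the same argument as the paper's: both pass through a countable base, select for each subordinate basis element one covering set $U_\alpha$, and verify the two inclusions (the paper handles basis elements with no single covering $U_\alpha$ by assigning $U_n=\emptyset$, while you simply restrict the index set, a cosmetic difference). Your closing remark about separability versus second countability is well taken --- the paper's proof also tacitly uses a countable base, which is harmless since the lemma is applied to a compact metric space.
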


\begin{proof}
Take a countable basis for the topology and let $\{V_n\}$ be the set of basis elements that are contained in $\bigcup_{\alpha} U_\alpha$.
For each $V_n$, set $U_n = U_\alpha$ for some $U_\alpha$ containing $V_n$, or $U_n = \emptyset$ if no such $U_\alpha$ exists.  We claim that $\bigcup_n U_n = \bigcup_{\alpha} U_\alpha$. \\
The inclusion $\bigcup_n U_n \subseteq \bigcup_{\alpha} U_\alpha$ is immediate. To see that $\bigcup_n U_n \supseteq \bigcup_{\alpha} U_\alpha$, let $x$ be any element of $\bigcup_{\alpha} U_\alpha$. Then $x$ is in $U_\alpha$ for some $\alpha$, and so there exists a basis element $V_n$ such that $x \in V_n \subseteq U_\alpha$.  Thus $U_n$ is not empty and contains $x$.
\end{proof}

\begin{lemma} \label{secondo}
Let $\phi:X\times\R\rightarrow X$ be a continuous flow on a compact metric space $(X,d)$. Then 
$$\bigcap \{cl(B\cup B^\bullet) : \text{$B$ is strongly stable}\} = \bigcap \{B\cup B^\bullet: \text{$B$ is strongly stable}\}.$$
\end{lemma}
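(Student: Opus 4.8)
The plan is to combine Theorem \ref{teoremann} with a separation argument. Since $B\cup B^\bullet\subseteq cl(B\cup B^\bullet)$ for every strongly stable $B$, the inclusion $\bigcap(B\cup B^\bullet)\subseteq\bigcap cl(B\cup B^\bullet)$ is immediate, so all the content lies in the reverse inclusion. By Theorem \ref{teoremann} the left-hand side of the claimed identity equals $\mathcal{SCR}_d(\phi)$; hence it suffices to prove that every $x\notin\mathcal{SCR}_d(\phi)$ fails to lie in $\bigcap cl(B\cup B^\bullet)$, i.e. to exhibit a \emph{single} strongly stable set $B_1$ with $x\notin cl(B_1\cup B_1^\bullet)$.

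Fix such an $x$. By Theorem \ref{teoremann} there is a strongly stable $B_0$, with a family $(U_\eta)_{\eta\in(0,1)}$ and function $T(\eta)$ as in Definition \ref{s s set}, such that $x\notin B_0\cup B_0^\bullet$; recall this means $x\notin B_0$ while $\omega(x)\subseteq B_0$. My candidate separating set is the attractor $B_1:=\omega(U_\eta)$ for a suitably small $\eta$. First, since the $U_\eta$ are nested increasing the sets $\omega(U_\eta)$ are nested, and $x\notin B_0=\bigcap_\eta\omega(U_\eta)$ by property $(ii)$, so I may fix $\eta$ with $x\notin\omega(U_\eta)=B_1$; as $B_1$ is closed, $x$ has a neighborhood $V_1$ with $V_1\cap B_1=\emptyset$. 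Second, because $\omega(x)\subseteq B_0\subseteq\mathrm{int}(U_\eta)$ and $\omega(x)\neq\emptyset$, the forward orbit of $x$ enters $\mathrm{int}(U_\eta)$ at some time $t_0$; by continuity of the flow $\phi_{t_0}(z)\in U_\eta$ for every $z$ in a neighborhood $V_2$ of $x$, and then property $(iii)$ of Definition \ref{s s set} (the eventual forward invariance of $U_\eta$) forces $\phi_t(z)\in U_\eta$ for all $t\geq t_0+T(\eta)$, whence $\omega(z)\subseteq\omega(U_\eta)=B_1$. In particular $\omega(z)\cap B_1\neq\emptyset$, so $z\notin B_1^\bullet$. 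Taking $V:=V_1\cap V_2$, every $z\in V$ satisfies $z\notin B_1$ and $z\notin B_1^\bullet$, so $V\cap(B_1\cup B_1^\bullet)=\emptyset$ and therefore $x\notin cl(B_1\cup B_1^\bullet)$, as desired. (In Example \ref{es union open} this recovers exactly the sets $B_x$, $x>0$, that separate the points $\{0\}\times(0,1)$.)

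The step that carries all the weight, and which I expect to be the \emph{main obstacle}, is verifying that the attractor $B_1=\omega(U_\eta)$ is itself strongly stable, so that it is admissible in the intersection. The neighborhoods and the metric condition $(i)$ of Definition \ref{s s set} do not transfer by a naive reparametrisation of $(U_\eta)$: mapping a sub-range $(\eta,1)$ onto $(0,1)$ dilates the parameter but leaves the ambient distance unchanged, so the metric budget $\lambda-\eta$ in $(i)$ is overspent and the inclusion can fail. This is precisely the Lipschitz-type control that obstructed the purely variational construction of \cite{BF2}. I would therefore establish strong stability of $\omega(U_\eta)$ intrinsically, producing a nested family of eventually forward invariant neighborhoods (attractor blocks) shrinking onto $\omega(U_\eta)$ and satisfying $(i)$--$(iii)$, using the compactness of $X$ together with the eventual forward invariance of the $U_\eta$; alternatively one can appeal to the relationship between attractors and strongly stable sets developed in \cite{BF1}. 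Once this is secured, the two inclusions combine to give $\bigcap cl(B\cup B^\bullet)=\bigcap(B\cup B^\bullet)$.
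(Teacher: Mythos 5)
Your separation argument in the second paragraph is sound and, in its endgame, structurally identical to the paper's: both find a neighborhood $V$ of $x$ on which $\omega(z)$ is trapped in the candidate set (so $V$ misses the $\bullet$-part) while $x$ itself misses the candidate set. But the pivotal claim — that $B_1=\omega(U_\eta)$ is strongly stable — is not just the ``main obstacle'' you flag; it is a genuine gap, and I believe it is false for an arbitrary strongly stable family. Strong stability is a quantitative, metric notion: condition $(ii)$ together with the eventual forward invariance $(iii)$ can be unachievable for a plain attractor-like set $\omega(U_\eta)$. Concretely, take the flow on the circle whose fixed-point set is a fat Cantor set $K$, with all gaps flowing in the same direction. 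A member $U_\eta$ of a strongly stable family may be an arc terminating exactly at the left endpoint $e$ of a gap $(e,b')$: such an arc is closed and forward invariant, and the larger members $U_\lambda$ can absorb the whole gap and $b'$, so conditions $(i)$--$(iii)$ for the family around $B_0$ are consistent. But then every closed, eventually forward invariant neighborhood $W$ of $\omega(U_\eta)$ contains points of $(e,e+\epsilon)$, whose forward orbits accumulate on $b'$, so $(iii)$ forces $b'\in W$ and hence $b'\in\omega(W)$; since $b'\notin\omega(U_\eta)$, condition $(ii)$ fails for \emph{every} candidate family, and $\omega(U_\eta)$ is not strongly stable. Your fallback of citing the ``relationship between attractors and strongly stable sets'' in \cite{BF1} does not rescue this: what \cite{BF1} (Example 4.3) proves is the strong stability of the very particular set $\omega(\bar{\Omega}(Y,\varepsilon,T))$, not of $\omega(U)$ for a general eventually forward invariant closed $U$.

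This is exactly where the paper takes a different route. Instead of extracting the candidate set from the given family $(U_\eta)$ of $B_0$, it builds it from the point $x$ itself using strong-chain sets: since $x\notin\mathcal{SCR}_d(\phi)$ there are $\varepsilon,T$ with $x\notin\bar{\Omega}(x,\varepsilon,T)$; a compactness argument yields a closed ball $\tilde{C}$ centered at $x$ with $\tilde{C}\cap\bar{\Omega}(\tilde{C},\varepsilon,T)=\emptyset$, and one sets $\tilde{B}:=\omega(\bar{\Omega}(\tilde{C},\varepsilon,T))$. The $\varepsilon$-budget structure of strong chains is precisely what furnishes the Lipschitz-type condition $(i)$ (enlarging the allowed error by $\delta$ enlarges $\Omega$ by at least a $\delta$-neighborhood) — the control you correctly observe is missing for a naive reparametrisation, but which your proposal then still needs and cannot supply for $\omega(U_\eta)$. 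With $\tilde{B}$ in hand, the paper concludes as you do: $x\notin\tilde{B}$ because $\tilde{B}\subseteq\bar{\Omega}(\tilde{C},\varepsilon,T)$, and $\omega(y)\subseteq\omega(\tilde{C})\subseteq\tilde{B}$ for all $y\in\tilde{C}$ (Lemma 3.6 and Corollary 3.1 of \cite{BF1}), so $\mathrm{int}(\tilde{C})$ minus a small ball around $\tilde B$ avoids $\tilde{B}\cup\tilde{B}^\bullet$. To repair your argument you would have to replace $\omega(U_\eta)$ by such a chain-theoretic set; as written, the proof does not go through.
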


\begin{proof} The inclusion 
$$\bigcap \{cl(B\cup B^\bullet) : \text{$B$ is strongly stable}\} \supseteq \bigcap \{B\cup B^\bullet: \text{$B$ is strongly stable}\}$$
is immediate. \\
In order to show the other inclusion, we need to recall some results from  \cite{BF1}[Section 3]. \\
For fixed $\varepsilon >0$, $T > 0$ and $Y \subset X$, let
$$\Omega(Y,\varepsilon,T) := \{x \in X : \text{there is a strong } (\varepsilon,T)\text{-chain from a point of } Y \text{ to } x\}$$ and
$$\bar{\Omega}(Y,\varepsilon,T) := \bigcap_{\eta > 0} \Omega(Y,\varepsilon + \eta,T). $$ 
Moreover, let
$$\bar{\Omega}(Y) := \bigcap_{\varepsilon >0, \ T > 0} \Omega(Y,\varepsilon,T) = \bigcap_{\varepsilon >0, \ T > 0} \bar{\Omega}(Y,\varepsilon,T).$$
Let now $x$ be a point in $cl(B\cup B^\bullet)\backslash(B\cup B^\bullet)$ for some strongly stable set $B$.  We will show that there exists a strongly stable set $\tilde{B}$ (clearly depending on the point $x$) such that $x\not\in cl(\tilde{B}\cup \tilde{B}^\bullet)$. \\
Since $x \notin B\cup B^\bullet$, by Theorem \ref{teoremann}, $x \notin \mathcal{SCR}_d(\phi)$. Equivalently, 
$$x \notin \bar{\Omega}(x) = \bigcap_{\varepsilon > 0, \ T > 0} \bar{\Omega}(x,\varepsilon,T).$$
In particular, there exist $\varepsilon > 0$ and $T >0$ such that $x \notin \bar{\Omega}(x,\varepsilon,T)$. \\
We proceed by proving that --corresponding to these $\varepsilon > 0$ and $T >0$-- there exists a closed ball $\tilde{C}$ centered in $x$ such that
\begin{equation} \label{ball}
\tilde{C} \cap \bar{\Omega}(\tilde{C}, \varepsilon, T) = \emptyset.
\end{equation}
Suppose, for the sake of contradiction, that $C \cap \bar{\Omega}(C, \varepsilon, T) \ne \emptyset$ for every closed ball $C$ centered at $x$, in particular, for a sequence $(C_n)_{n \in \mathbb{N}}$ of closed balls centered at $x$ of radius $1/n \to 0$. This means that for every $\eta > 0$ and $n \in \mathbb{N}$ there are points $x_n^{\eta}$ and $y_n^{\eta} \in C_n$ such that there exists a $(\varepsilon+\eta,T)$-chain from $y_n^{\eta}$ to $x_n^{\eta}$. Since, for every $\eta > 0$, 
$$\lim_{n \to + \infty} x_n^{\eta} = x = \lim_{n \to +\infty} y_n^{\eta},$$
we conclude that $x \in \bar{\Omega}(x,\varepsilon,T)$. This fact contradicts the hypothesis that $x \notin \bar{\Omega}(x,\varepsilon,T)$ and therefore there necessarily exists a closed ball $\tilde{C}$ centered in $x$ satisfying formula (\ref{ball}). \\ 
In order to conclude,  define
$$\tilde{B} := \omega(\bar{\Omega}(\tilde{C},\varepsilon,T)),$$
which is --see Example 4.3 in \cite{BF1}-- a strongly stable set. Moreover (see Corollary 3.1 in \cite{BF1}),
$$\tilde{B} \subseteq \bar{\Omega}(\tilde{C},\varepsilon,T).$$
As a consequence, since (by formula (\ref{ball})) $x \notin \bar{\Omega}(\tilde{C},\varepsilon,T)$, then $x \notin \tilde{B}$. We finally recall that --by Lemma 3.6 in \cite{BF1}-- $\omega(\tilde{C}) \subseteq  \omega(\bar{\Omega}(\tilde{C},\varepsilon,T))$, so that for every point $y \in \tilde{C}$, we have 
$$\omega(y) \subseteq \omega(\tilde{C}) \subseteq \omega(\bar{\Omega}(\tilde{C},\varepsilon,T)) = \tilde{B}.$$
This means that $y \notin \tilde{B}^{\bullet}$ for all $y \in \tilde{C}$. \\
Finally, since the point $x \notin \tilde{B}$ and $x \in int(\tilde{C})$, we conclude that $x\not\in cl(\tilde{B}\cup \tilde{B}^\bullet)$ and the desired inclusion is proved.  
\end{proof}

\noindent As a direct consequence of the previous lemmas, we can give the \\
~\newline
\noindent \textit{Proof of Theorem \ref{Teo Jim}.}  Consider the collection of open sets
$$\{U_\alpha\} := \{ X\backslash cl(B_\alpha\cup B^\bullet_\alpha) : \text{$B_\alpha$ is strongly stable} \}.$$ 
 Then, by Lemma \ref{primo}, there exists an --at most countable-- subcollection $\{U_{n}\}$ such that
 $$\bigcup_{\alpha} U_\alpha = \bigcup_{n} U_n.$$
Consequently, applying also Theorem \ref{teoremann} and Lemma \ref{secondo}, we obtain
\begin{eqnarray*}
\mathcal{SCR}_d(\phi) &=& \bigcap_{\alpha} B_{\alpha} \cup B_{\alpha}^{\bullet} = \bigcap_{\alpha} cl(B_{\alpha} \cup B_{\alpha}^{\bullet}) \\
&=& X \backslash \bigcup_{\alpha} U_\alpha = X \backslash \bigcup_n U_n = \bigcap_n cl(B_n\cup B_n^{\bullet}) \\
&\supseteq&  \bigcap_n B_n\cup B_n^{\bullet} \supseteq \bigcap_\alpha B_\alpha\cup B_\alpha^{\bullet} = \mathcal{SCR}_d(\phi),
\end{eqnarray*}
which gives exactly formula (\ref{count decom}). \hfill $\Box$

\subsection{End of proof of Theorem \ref{main thm}.} 
~\newline
\noindent\textit{Proof of Theorem \ref{main thm}.} With reference to Theorems \ref{OlgaAnna} and \ref{Teo Jim}, let denote by $h_n$ the Lyapunov function for the $n$th-pair $(B_n,B_n^{\bullet})$. As in \cite{conBOOK}[Page 39], define the function
	\begin{equation}
	h(x)=\sum_{n=0}^{+\infty}\dfrac{h_n(x)}{3^n}.
	\end{equation}
	The function $h$ is continuous. Moreover --as a consequence of Theorems \ref{teoremann} and \ref{OlgaAnna}-- $h$ is a Lyapunov function for $\phi$ which is strictly decreasing outside $\mathcal{SCR}_d(\phi)$.
	\hfill\qed \\
	\begin{remark}
	We observe that the proof in \cite{BF1} of the decomposition of the strong chain recurrent set (here Theorem \ref{teoremann}) actually  uses only the property of $\phi$ being a semiflow. Moreover,  the construction of the function in Theorem~\ref{OlgaAnna} and the countability result in Theorem~\ref{Teo Jim} also work under this hypothesis. Consequently, Theorem~\ref{main thm} can be rephrased, more generally, for a continuous semiflow on a compact metric space.
\end{remark}	
	~\newline
\noindent As recalled in the Introduction, if the flow is uniformly Lipschitz continuous on compact subsets of $[0,+\infty)$, then the result of Theorem \ref{main thm} is proved in \cite{BF2}[Theorem 4.1]. The proof we present here does not need any additional regularity assumption on the continuous flow. \\
~\newline
We finally present an example of a continuous flow which is not uniformly Lipschitz continuous on compact time subsets. That is, an example of a flow to which the result in \cite{BF2} cannot be applied, but for which Theorem \ref{main thm} holds.

	\begin{esempio}\label{es Martin} 	
	\noindent Let us think of $\mathbb{T}$ as $[0,1]$ with $0$ identified to $1$. Define $\tau:[0,1]\rightarrow\R$ as
	$$
	\tau(x):=\begin{cases}
	\sqrt{x-\frac{1}{2}}+\frac{\sqrt{2}-1}{\sqrt{2}}\quad x\in\left[\frac{1}{2},1\right] \\
	\\
	\sqrt{\frac{1}{2}-x}+\frac{\sqrt{2}-1}{\sqrt{2}}\quad x\in\left[0,\frac{1}{2}\right]
	\end{cases}
	$$
Consider
	$
	X=\{ (x,y)\in\R^2 :\ x\in[0,1], \ 0\leq y\leq \tau(x) \}
	$
	endowed with the standard metric from $\R^2$, see Figure \ref{funz-tetto}. Identify the graph of $\tau$ with $[0,1]$, i.e. such that $(x,\tau(x))\sim(x,0)$.
 Let us consider the flow $\phi$ on $X$ whose flow lines are described in Figure \ref{flow}. 
 In particular, the central vertical strip (of positive measure) is made of periodic points. Consequently, $\mathcal{SCR}(\phi) = Per(\phi) \ne X$. This is an example of a continuous flow, not uniformly Lipschitz continuous on compact subsets of $[0,+\infty)$. 
  \begin{figure}
	\begin{minipage}{0.45\textwidth}
		\begin{center}
			\includegraphics[scale=0.3]{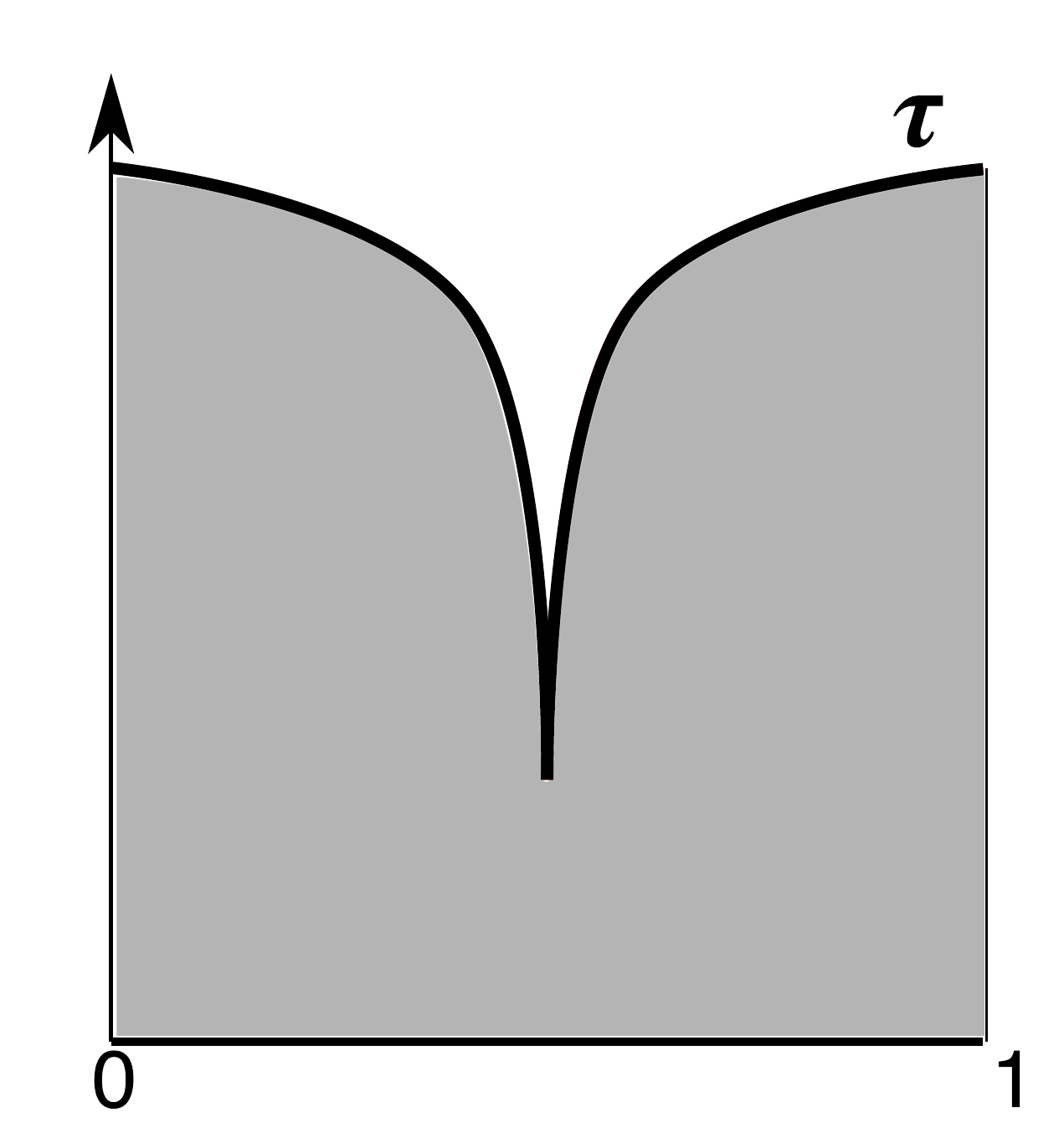}
			\caption{The metric space of Example \ref{es Martin}.}
			\label{funz-tetto}
		\end{center}
	\end{minipage}
	\begin{minipage}{0.45\textwidth}
		\begin{center}
			\includegraphics[scale=0.16]{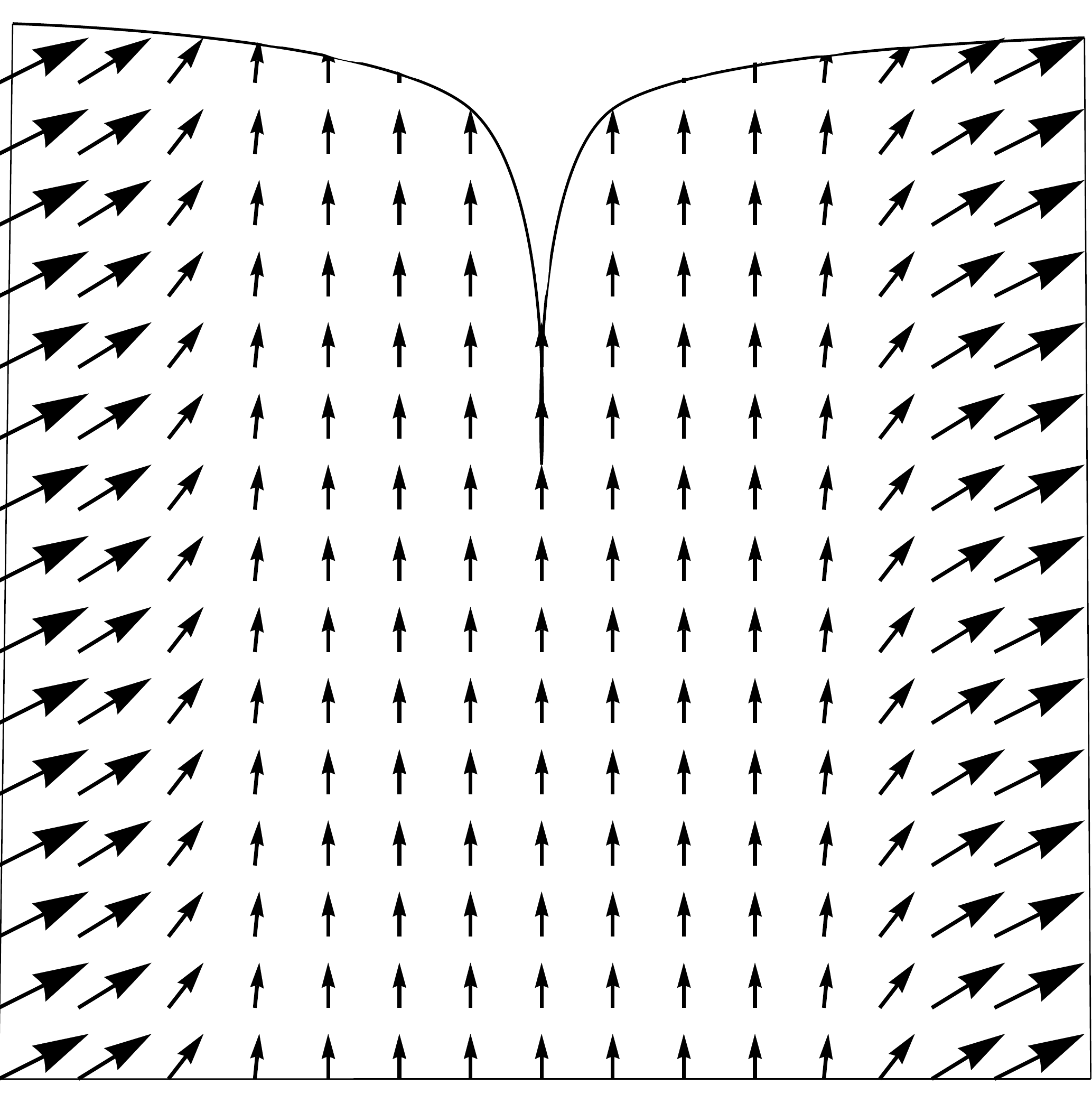}
			\caption{The flow lines of Example \ref{es Martin}.}
			\label{flow}
		\end{center}
	\end{minipage}
\end{figure}
\end{esempio}

\bibliographystyle{alpha}

\end{document}